\newcommand {\rea}{\mathbb{R}}
\newcommand {\BBZ}{\mathbb{Z}}
\def\<{\left\langle}
\def\>{\right\rangle}
\newcommand {\calI}{\mathcal{I}}
\newtheorem{theorem}{Theorem}[section]
\newtheorem{proposition}[theorem]{Proposition}
\newtheorem{lemma}[theorem]{Lemma}
\author{Richard Oberlin}
\address{
Mathematics Department\\
Louisiana State University\\
Baton Rouge, LA}
\email{oberlin@math.lsu.edu}
\thanks{The author is supported in part by NSF Grant DMS-1068523.}
\subjclass{Primary 42A45}
\begin{document}

\title[Compositions of maximal operators with singular integrals]{Estimates for compositions of maximal operators with singular integrals}

\begin{abstract}
We prove weak-type $(1,1)$ estimates for compositions of maximal operators with singular integrals. Our main object of interest is the operator $\Delta^*\Psi$ where $\Delta^*$ is Bourgain's maximal multiplier operator and $\Psi$ is the sum of several modulated singular integrals; here our method yields a significantly improved bound for the $L^q$ operator norm when $1 < q < 2.$ We also consider associated variation-norm estimates.
\end{abstract}

\maketitle

\section{Introduction}

Let $\Omega$ be the set of all dyadic subintervals of $\rea$ and let $\{\phi_{\omega}\}_{\omega \in \Omega}$ be a collection of smooth functions, each adapted to $\omega$ in the sense that $\phi_{\omega}$ is supported on $\omega$ and the quantity 
\begin{equation} \label{necessarysmoothness}
D_M = \sup_{\omega \in \Omega} |\omega|^{M} \|\phi_{\omega}^{(M)}\|_{L^{\infty}} 
\end{equation}
is finite where $\phi_{\omega}^{(M)}$ denotes the $M$'th derivative of $\phi_{\omega}$ and $M$ is some large number which depends on the quantity $\epsilon$ below. Let $\Xi$ be a finite collection of real numbers. For each integer $k$ consider the operator 
\[
\Delta_k[f] = \sum_{\substack{\omega \in \Omega : |\omega| = 2^{-k} \\ \omega \cap \Xi \neq \emptyset}} \check{\phi}_{\omega} * f.
\]
One then forms the maximal operator
\[
\Delta^*[f](x) = \sup_k |\Delta_k[f](x)|.
\]
Bounds for operators similar to $\Delta^*$ were originally studied by Bourgain \cite{bourgain89pet}, and have since proven to be useful for approaching many problems in time-frequency analysis and pointwise convergence for ergodic systems. 

It follows from the method of \cite{nazarov10czd}, see also \cite{demeter09osm}, that for $1 < q \leq 2$ and $r > 2$ 
\begin{equation} \label{LqBourgain}
\|\Delta^*[f]\|_{L^q} \leq C_{q,r} (1 + \log |\Xi|)|\Xi|^{\frac{1}{q} - \frac{1}{r}} (D_M + \sup_{\xi \in \Xi} \|\sum_{\substack{\omega \in \Omega : |\omega| = 2^k }} \phi_\omega(\xi)\|_{V^r_k}) \|f\|_{L^q}
\end{equation}
where $\|\cdot\|_{V^r}$ is the $r$-variation norm (see below). The bound above is proven by establishing a weak-type estimate at $L^1$ and interpolating it with the $L^2$ bound which was originally proven in \cite{demeter08bdr}. 

Our focus here will be on studying $L^q$ bounds for operators formed by composing $\Delta^*$ with certain Fourier-multipliers. Let $\Upsilon$ be a finite set of disjoint (not necessarily dyadic) subintervals of $\rea$ and let $\{\psi_{\upsilon}\}_{\upsilon \in \Upsilon}$ be a collection of functions such that each $\psi_{\upsilon}$ is supported on $\upsilon$. We then write 
\[
\Psi[f] = \sum_{\upsilon \in \Upsilon} (\psi_{\upsilon} \hat{f}) \check{\ }.
\]
It was proven by Coifman, Rubio de Francia, and Semmes \cite{coifman88mdf}, see also \cite{tao01emt}, that for $r \geq 2$, $\frac{1}{q} - \frac{1}{2} < \frac{1}{r}$, and $\epsilon > 0$
\begin{equation} \label{CRSbound}
\|\Psi[f]\|_{L^q} \leq C_{q,r,\epsilon} |\Upsilon|^{\frac{1}{q} - \frac{1}{2} + \epsilon} \sup_{\upsilon \in \Upsilon}\|\psi_{\upsilon}\|_{V^r} \|f\|_{L^q}.
\end{equation}

Separate applications of \eqref{LqBourgain} and \eqref{CRSbound} give a bound for the operator norm of $\Delta^*\Psi$ which is on the order of $|\Xi|^{\frac{1}{q} - \frac{1}{r} + \epsilon}|\Upsilon|^{\frac{1}{q} - \frac{1}{2} + \epsilon} .$  The goal of this paper is to improve the norm estimate to $(|\Xi| + |\Upsilon|)^{\frac{1}{q} - \frac{1}{r} + \epsilon}$ (to put this in context, we are mostly interested in the case when $r$ is close to 2 and $|\Xi|$ and $|\Upsilon|$ are comparable). Specifically, we will demonstrate

\begin{theorem} \label{maintheorem}
Suppose $1 < q < 2$, $2 < r < 2q$, and $\epsilon > 0$. Then
\begin{multline}
\|\Delta^*[\Psi[f]]\|_{L^q} \leq C_{q,r,\epsilon}  
(|\Xi| + |\Upsilon|)^{\frac{1}{q} - \frac{1}{r} + \epsilon} \\
 (D_M + \sup_{\xi \in \Xi} \|\sum_{\substack{\omega \in \Omega : |\omega| = 2^k }} \phi_\omega(\xi)\|_{V^r_k}) \sup_{\upsilon \in \Upsilon} \|\psi_{\upsilon}\|_{V^r} \|f\|_{L^q}.
\end{multline}
\end{theorem}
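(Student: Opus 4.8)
The plan is to prove a weak-type $(1,1)$ estimate for $\Delta^*\Psi$ with the correct dependence $(|\Xi|+|\Upsilon|)^{1-\frac1r}$ (up to $\epsilon$-losses and the variation/smoothness factors), and then interpolate it against the $L^2$ bound obtained by composing the $L^2$-boundedness of $\Delta^*$ from \cite{demeter08bdr} with Rubio de Francia's inequality. Interpolating the weak-type $(1,1)$ estimate with growth $(|\Xi|+|\Upsilon|)^{1-\epsilon'}$ against the $L^2$ bound with growth $(|\Xi|+|\Upsilon|)^{0+}$ yields, at exponent $q$, a power of the form $(\frac1q - \frac12)\cdot 2(1-\frac1r) + \epsilon$ — one checks this matches $\frac1q-\frac1r$ for the endpoint $r$ and that the open range $2<r<2q$ is exactly what makes the interpolation legitimate; so the whole theorem reduces to the endpoint weak-type bound.

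For the weak-type estimate, I would follow the Calder\'on--Zygmund-type scheme of \cite{nazarov10czd} (as used to prove \eqref{LqBourgain}), but applied to the composed kernel. Fix $f$ with $\|f\|_{L^1}=1$ and a height $\lambda$; perform a Calder\'on--Zygmund decomposition $f = g + b$ at height $\lambda$ relative to the exceptional set $E$. The good part $g$ is handled by the $L^2$ theory together with Rubio de Francia, losing only $(|\Xi|+|\Upsilon|)^{\epsilon}$ after optimizing. For the bad part, the key point is that $\Psi b = \sum_{\upsilon} (\psi_\upsilon \hat b)\check{\ }$, and each modulated singular integral $\psi_\upsilon$ has a kernel that is integrable away from the support of the corresponding bad atom with the usual $L^1$ tail bounds, uniformly in $\upsilon$ up to the factor $\sup_\upsilon\|\psi_\upsilon\|_{V^r}$ and the number $|\Upsilon|$ of pieces. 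One then has to estimate $\Delta^*$ applied to $\Psi b$ outside (a dilate of) $E$. Here the crucial structural observation is that the frequency supports organize things: the operator $\Delta_k$ is a sum over dyadic $\omega$ of length $2^{-k}$ meeting $\Xi$, and composing with $\Psi$ one only sees those $(\omega,\upsilon)$ with $\omega\cap\upsilon\neq\emptyset$; for fixed $k$ the total number of relevant $(\omega,\upsilon)$ pairs is $O(|\Xi|+|\Upsilon|)$ rather than $O(|\Xi|\,|\Upsilon|)$, because each dyadic $\omega$ of a fixed scale meeting $\Xi$ contributes $O(1)$ per point of $\Xi$ and each $\upsilon\in\Upsilon$ overlaps $O(1)$ dyadic intervals of that scale once the scale is finer than $|\upsilon|$ (and at coarser scales $\omega$ swallows several $\upsilon$'s, which one sums by the single-scale $L^1\to L^{1,\infty}$ bound for $\Psi$ localized to $\omega$). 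This is precisely the mechanism that upgrades the naive product $|\Xi|\,|\Upsilon|$ to the sum $|\Xi|+|\Upsilon|$.

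Concretely I would split, for each $k$, the composite kernel into two regimes according to whether the scale $2^{-k}$ is larger or smaller than the lengths $|\upsilon|$. In the regime where $2^{-k}\gtrsim|\upsilon|$, $\Delta_k\circ(\psi_\upsilon\hat\cdot)\check{\ }$ is itself an $L^1$-normalized (in the appropriate sense) operator whose kernel has good decay at the scale $2^{-k}$, so summing the bad atoms against it gives a bound controlled by the measure of $E$ times $|\Xi|+|\Upsilon|$ raised to the right power; the variation norm $\|\sum_\omega\phi_\omega(\xi)\|_{V^r_k}$ enters through the standard trick of writing $\Delta_k - \Delta_{k-1}$ as a single Littlewood--Paley-type piece and summing a geometric/telescoping series in $k$, which costs a factor $(1+\log(|\Xi|+|\Upsilon|))$ absorbed into the $\epsilon$. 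In the regime where $2^{-k}\lesssim|\upsilon|$, a fixed $\upsilon$ is seen by $O(2^{-k}/|\upsilon|)$ dyadic intervals $\omega$, but only those meeting $\Xi$ survive, so the effective count is again $O(|\Xi|)$ for that part, and one runs the Nazarov--Oberlin--Thiele argument for $\Delta^*$ directly on $\Psi b$ exploiting that $\Psi b$ still has the right $L^1$ and cancellation properties away from $E$. I expect the main obstacle to be exactly this bookkeeping: making the two regimes patch together cleanly so that the total count of $(\omega,\upsilon)$ interactions at every scale is genuinely $O(|\Xi|+|\Upsilon|)$ and not $O(|\Xi|+|\Upsilon|)$ times a factor that grows with the number of scales — this requires using the cancellation in the telescoping $\Delta_k - \Delta_{k-1}$ (equivalently the $V^r$ hypothesis on the multiplier symbols) rather than crude triangle-inequality summation, and carefully tracking that the Rubio de Francia factor for $\Psi$ stays at $|\Upsilon|^{0+}$ throughout. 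The variation-norm version of the theorem alluded to in the abstract should then follow by replacing $\Delta^*$ with the $V^r$-variant of $\Delta_k$ in $k$ and repeating the argument verbatim, since all the estimates above are proved at the level of the individual pieces $\Delta_k - \Delta_{k-1}$.
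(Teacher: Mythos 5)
Your high-level scheme (prove a weak-type $(1,1)$ bound and interpolate with $L^2$) and your identification of the Nazarov--Oberlin--Thiele multi-frequency Calder\'on--Zygmund machinery are both on target, but the specifics diverge from the paper in ways that leave real gaps.

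First, the interpolation arithmetic is wrong: you take the $L^2$ operator norm to be $(|\Xi|+|\Upsilon|)^{0+}$, but composing \eqref{LqBourgain} at $q=2$ with Rubio de Francia gives $N^{\frac12-\frac1r}$ with $N = |\Xi|+|\Upsilon|$ (it is $N^{0+}$ only in the limit $r\to 2$). With the corrected $L^2$ norm $N^{\frac12-\frac1r}$ and a weak-$(1,1)$ norm $N^{1-\frac1r+\epsilon}$, interpolation does produce $N^{\frac1q-\frac1r+\epsilon}$; your version $(\frac1q-\frac12)\cdot2(1-\frac1r)$ only coincides with $\frac1q-\frac1r$ when $r=2$. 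Equivalently, the paper renormalizes so that the $L^2$ norm is $1$, and then the weak-$(1,1)$ bound it proves is $N^{\frac12+O(\epsilon)}$.

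Second, and more seriously, the mechanism you identify as ``the crucial structural observation'' --- counting $(\omega,\upsilon)$ intersections at fixed scale to get $O(|\Xi|+|\Upsilon|)$ instead of $O(|\Xi|\cdot|\Upsilon|)$ --- is not the engine of the proof. The saving to $|\Xi|+|\Upsilon|$ comes from performing a \emph{single} multi-frequency Calder\'on--Zygmund decomposition relative to the set $\Xi \cup \{s_\upsilon : \upsilon\in\Upsilon\}$ (endpoints of the intervals $\upsilon$), which has size comparable to $|\Xi|+|\Upsilon|$. The bad atoms $b_I$ then satisfy the modulated mean-zero condition $\int b_I(x)e^{-i\xi x}\,dx=0$ for every $\xi$ in this set, and it is this cancellation --- not interval counting --- that makes the off-exceptional-set $L^1$ and $L^2$ bounds work. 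To exploit it one must also (i) first reduce, via Lemma \ref{intervaldecompositionlemma} (Coifman--Rubio de Francia--Semmes), to the case where each $\psi_\upsilon$ is a constant multiple of $1_\upsilon$, paying $2^{j(\frac1q-\frac1r+\epsilon)}\cdot 2^{-j/r}$ per dyadic level $j$ (summable precisely because $r<2q$), and (ii) decompose each $1_\upsilon$ into smooth pieces $\psi_{s,\upsilon,j},\psi_{m,\upsilon,j},\psi_{d,\upsilon,j}$ concentrated at scale $2^{-j}$ near the endpoints of $\upsilon$, so that the modulated mean-zero condition at $s_\upsilon$ produces the required decay in $j$; a Cotlar--Stein almost-orthogonality argument is then needed for the $b_I-f_I$ contribution. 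Your proposal skips the $V^r\to$ step-function reduction entirely, replaces the endpoint-localized decomposition and modulated cancellation by a purely combinatorial count, and leaves the $L^1$ side of the weak-type estimate at the level of ``$\Psi b$ still has the right cancellation properties,'' which is exactly the point that needs a precise argument. As written, the proposal would not close.
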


Using the method of \cite{coifman88mdf}, where the functions $\psi_{\upsilon}$ are efficiently decomposed into sums of step functions, Theorem \ref{maintheorem} will follow from the case when each $\psi_{\upsilon}$ is a constant multiple of $1_{\upsilon}.$
Specializing further to the situation $|\Xi| = |\Upsilon| = 1$, the resulting operator bears some resemblance to the composition of a maximal averaging operator with the Hilbert transform
\[
H[f](x) = p.v. \int f(x - y)\frac{1}{y}.
\] 
Through separate applications of the standard bounds for the Hilbert-transform and Hardy-Littlewood maximal operator, one sees that this composition is bounded for values of $q$ strictly between $1$ and $\infty$. Our method, however, will require a weak-type estimate at $q=1$ and we provide a simple proof of such an estimate, as a model for the general case, in Section \ref{sectionsfc}. 
  
Our main motivation for considering Theorem \ref{maintheorem} is its connection to the return times conjecture for the truncated Hilbert transform. Specifically, our aim is to extend a pointwise convergence result from \cite{demeter08bdr} for functions $g \in L^2$ to exponents $q$ below $2$. In \cite{oberlin11awm}, it is shown that such an extension was possible for the Walsh model of the problem, and a norm improvement, as in Theorem \ref{maintheorem}, for a Walsh-analogue of $\Delta^*\Psi$ was a key ingredient in the proof. We thus view the current work as progress towards obtaining the desired pointwise convergence result, however it is not completely clear at present whether Theorem \ref{maintheorem} is strong enough. Ideally, as in \cite{demeter08bdr}, one would like to take each function $\phi_{\omega}$ to be a constant multiple of $1_\omega$; without significant refinements, our proof does not permit this for $q$ below 2, even for weaker bounds such as \eqref{LqBourgain}. However, in the case of the return times theorem for averages \cite{demeter09irt}, it was shown that one can make due with smooth $\phi_\omega$ and we hope that the same might hold true for the truncated Hilbert transform. 

Although, to simplify the exposition, we will focus on estimates for the maximal operator $\Delta^*$, a refinement of our technique permits a variation-norm analogue of Theorem \ref{maintheorem} (see \cite{nazarov10czd} for a corresponding variation-norm version of \eqref{LqBourgain}). The details will be given in Section \ref{vnsection}, where we establish
\begin{theorem} \label{varmaintheorem}
Suppose $1 < q < 2 < r < s$ and $\epsilon > 0$ satisfy $(\frac{1}{2} - \frac{1}{r})\frac{2}{s-2} + \frac{1}{q} - \frac{1}{2} < \frac{1}{r}$ and $\epsilon > 0$. Then
\begin{multline}
\|\Delta_k[\Psi[f]](x)\|_{L^q_x(V^s_k)} \leq C_{q,r,s,\epsilon}  
(|\Xi| + |\Upsilon|)^{(\frac{1}{2} - \frac{1}{r})\frac{s}{s-2} + \frac{1}{q} - \frac{1}{2} + \epsilon} \\
 (D_M + \sup_{\xi \in \Xi} \|\sum_{\substack{\omega \in \Omega : |\omega| = 2^k }} \phi_\omega(\xi)\|_{V^r_k}) \sup_{\upsilon \in \Upsilon} \|\psi_{\upsilon}\|_{V^r} \|f\|_{L^q}.
\end{multline}
\end{theorem}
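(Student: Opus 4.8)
The plan is to run the same argument that proves Theorem \ref{maintheorem}, but carrying the variation norm $V^s_k$ in the $k$ variable throughout in place of the $\ell^\infty_k$ supremum that defines $\Delta^*$. As with Theorem \ref{maintheorem}, by the Coifman--Rubio de Francia--Semmes decomposition of each $\psi_\upsilon$ into step functions (and absorbing the resulting $|\Upsilon|^{\frac1q-\frac12+\epsilon}$ and $\sup_\upsilon\|\psi_\upsilon\|_{V^r}$ factors as in the reduction used for Theorem \ref{maintheorem}), it suffices to treat the case $\psi_\upsilon = c_\upsilon 1_\upsilon$. So the core object is the operator sending $f$ to the two-parameter family $\Delta_k[\Psi[f]](x)$, and we want its $L^q_x(V^s_k)$ norm controlled by $(|\Xi|+|\Upsilon|)^{(\frac12-\frac1r)\frac{s}{s-2}+\frac1q-\frac12+\epsilon}$ times the stated seminorms.

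The engine is, as before, interpolation between an $L^2$ estimate and a weak-type $(1,1)$ estimate, now both phrased for the $V^s_k$-valued operator. First I would record the $L^2$ bound: here one invokes the variation-norm refinement of Bourgain's estimate (the $V^r$-in-$\xi$ hypothesis on $\sum_{|\omega|=2^k}\phi_\omega(\xi)$, with $r<s$, is exactly what powers the variational Bourgain bound from \cite{nazarov10czd}, \cite{demeter08bdr}) composed with the $L^2$-boundedness of $\Psi$; on $L^2$ the composition costs only the harmless factor $(1+\log(|\Xi|+|\Upsilon|))$-type loss, and in particular is $\lesssim (|\Xi|+|\Upsilon|)^{\epsilon}$. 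Second, and this is where the real work lies, one needs the weak-type $(1,1)$ estimate
\[
\big\|\, \|\Delta_k[\Psi[f]](x)\|_{V^s_k}\, \big\|_{L^{1,\infty}_x} \lesssim (|\Xi|+|\Upsilon|)^{\frac12-\frac1r+\delta}\,(\cdots)\,\|f\|_{L^1}
\]
for small $\delta$, mirroring the weak-$(1,1)$ step behind \eqref{LqBourgain}, with the model single-scale, single-frequency case carried out in Section \ref{sectionsfc}. Interpolating this with the $L^2$ bound via the Marcinkiewicz/real-interpolation machinery for Banach-space-valued (here $V^s$-valued) operators, and optimizing the split of the exponent between the two endpoints, yields $L^q_x(V^s_k)$ with the exponent $\theta(\frac12-\frac1r)+\dots$; tracking the constants and re-optimizing reproduces the claimed power $(\frac12-\frac1r)\frac{s}{s-2}+\frac1q-\frac12+\epsilon$, with the condition $(\frac12-\frac1r)\frac{2}{s-2}+\frac1q-\frac12<\frac1r$ arising precisely as the requirement that the interpolated exponent of $f$ stays in the admissible range (equivalently, that after the $V^r\to V^s$ loss there is still room to the $L^1$ endpoint).

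The main obstacle is the weak-type $(1,1)$ estimate for the $V^s_k$-valued composition. Two points make it more delicate than in the scalar $\Delta^*$ case of Theorem \ref{maintheorem}. First, one must perform a Calder\'on--Zygmund decomposition of $f$ at level $\alpha$ and, on the bad part supported on a union of cubes $Q$, exploit cancellation; but $\Delta_k\Psi$ is a composition, so the kernel seen at scale $k$ is a convolution of the $\Delta_k$-kernel with the $\Psi$-kernel, and the standard ``the kernel is smooth away from the support of $b_Q$'' argument must be run for the $V^s_k$-norm of this family simultaneously over all $k$ — this is where the $V^r\to V^s$ trade-off with $r<s$ is spent, converting the $V^r$ hypotheses on $\phi$ and $\psi$ into pointwise-in-$k$ summable kernel bounds at the cost of the factor $\frac{s}{s-2}$ in the exponent. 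Second, the frequency localization: $\Delta_k$ only sees dyadic $\omega$ meeting $\Xi$ and $\Psi$ only sees the intervals $\upsilon\in\Upsilon$, so the relevant frequency data is supported near $\Xi+\Upsilon$-type sets of size $\lesssim|\Xi|+|\Upsilon|$, and it is the geometry of these $O(|\Xi|+|\Upsilon|)$ frequency pieces — rather than the product $|\Xi||\Upsilon|$ — that must be extracted; I expect this to require the same kind of careful counting/orthogonality bookkeeping (almost-orthogonality of the pieces $\Delta_k\circ(\text{projection to }\upsilon)$, summed against the overlap structure of $\Xi$ relative to $\Upsilon$) that underlies the improvement in Theorem \ref{maintheorem}, now applied uniformly in $k$ and then $V^s$-summed. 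Once these two ingredients are in place the interpolation and the reduction to step functions are routine, and the details will be carried out in Section \ref{vnsection}.
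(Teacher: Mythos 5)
Your high-level skeleton is the right one (reduce to $\psi_\upsilon=c_\upsilon 1_\upsilon$ via Lemma \ref{intervaldecompositionlemma}, then interpolate a vector-valued $L^2$ bound with a weak-$(1,1)$ bound established by multiple-frequency Calder\'on--Zygmund), and it matches the paper's strategy. But both endpoint exponents you assert are wrong, and this is not a bookkeeping slip: it is exactly the place where the exponent $(\frac12-\frac1r)\frac{s}{s-2}$ must be produced, and your accounting produces something different.

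Concretely, you claim the $L^2\to L^2(V^s_k)$ norm of $\Delta_k\Psi$ is $\lesssim (|\Xi|+|\Upsilon|)^\epsilon$. That is false and is in fact the crux: the variation-norm analogue of Bourgain's $L^2$ estimate from \cite{nazarov10czd}, which is what the paper invokes, carries norm $C(1+\log N)\,N^{(\frac12-\frac1r)\frac{s}{s-2}}$ with $N=|\Xi|+|\Upsilon|$, and for $s$ close to $2$ this exponent is large, not $O(\epsilon)$. Likewise, you assert a weak-$(1,1)$ bound of size $N^{\frac12-\frac1r+\delta}$, but the weak-$(1,1)$ estimate actually available (in the paper's normalization) is $A\cdot N^{1/2+O(\epsilon)}$ where $A$ is the $L^2$ operator norm above, i.e.\ of size $N^{(\frac12-\frac1r)\frac{s}{s-2}+\frac12+O(\epsilon)}$. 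Interpolating the true endpoints at $\frac1q=(1-\theta)+\frac\theta2$ gives $N^{(\frac12-\frac1r)\frac{s}{s-2}+\frac1q-\frac12+O(\epsilon)}$, which is the stated exponent; interpolating your claimed endpoints would give a substantially smaller exponent, roughly $(\frac12-\frac1r)(\frac2q-1)$, which is not the theorem and which you cannot actually prove. The $\frac{s}{s-2}$ factor does not appear by ``re-optimizing'' the interpolation split; it is inherited verbatim from the $L^2$ variation bound and then carried unchanged through the Calder\'on--Zygmund argument because the good part $g$ is estimated in $L^2$.

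Separately, you gesture at the weak-$(1,1)$ proof but do not identify the genuinely new step relative to the proof of Proposition \ref{maintheoremforchar}. The $L^2$ and mean-zero/orthogonality pieces of that proof go through unchanged; what changes is the tail/local estimate for the pieces involving $\check\psi_{\upsilon,j}*b_I$, where the pointwise bound $\sup_{k\le j}|\check\phi_\omega*\check\psi_{\upsilon,j}*b_I|\le C2^{-j}(1+|2^{-j}\cdot|)^{-M}*|b_I|$ is no longer enough. One must control $\|\Delta_k[\check\psi_{\upsilon,j}*b_I](x)\|_{V^s_{k\le j}}$, and this requires the additional argument in the paper: for fixed $(\upsilon,j)$ the relevant $k$'s are grouped according to at most three minimal dyadic intervals $\tilde\Omega$ meeting both $\Xi$ and the support of $\psi_{\upsilon,j}$, and the $V^s_k$ variation of $\check\phi_{\omega_{\xi,k}}*\check\psi_{\upsilon,j}$ is bounded by $\|\phi_{\omega_{\xi,k}}(\xi)\|_{V^s_{k\le k_\xi}}$ plus a geometric tail $\sum_{k\le k_\xi}2^{k-k_\xi}$, after which the hypothesis on $\|\sum_{|\omega|=2^k}\phi_\omega(\xi)\|_{V^r_k}$ (with $r<s$) closes the estimate. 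This is the content you need to supply; your remark that ``the $V^r\to V^s$ trade-off ... converts the hypotheses into pointwise-in-$k$ summable kernel bounds'' does not describe a mechanism and, in any case, places the $\frac{s}{s-2}$ factor in the wrong endpoint.
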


\subsection{Notation guide}
The Fourier and inverse Fourier transforms will be written $\hat{\ }$ and $\check{\ }$ respectively. We use $|\cdot|$ to denote the cardinality or Lebesgue measure of a set, or the modulus of a complex number; hopefully the meaning is clear from context. The characteristic function of a set $E$ will be written $1_E.$ Given an exponent $1 \leq r < \infty$ and a function $f$ on $\rea$ we let $\|f\|_{V^r}$ denote the $r$-variation norm of $f$
\[
\|f\|_{V^r} = \|f\|_{L^\infty} + \sup_{N, \xi_0 < \dots < \xi_N}  \left(\sum_{j = 1}^N |f(\xi_j) - f(\xi_{j-1})|^r\right)^{1/r}
\]
where the supremum is over all strictly increasing finite length sequences of real numbers. We will also apply variation-norms to functions defined on the integers by restricting the range of the sequences. When $r = \infty$ we replace the $\ell^r$ norm by the $\ell^{\infty}$ norm and essentially recover the $L^{\infty}$ norm.

\section{The single frequency case} \label{sectionsfc}

Here we give a proof of:

\begin{theorem} \label{onefrequencytheorem}
Let $\phi$ be a Schwartz function such that $\hat{\phi}$ is compactly supported, and let $M_{\phi}$ be the associated maximal averaging operator
\[
M_{\phi}[f](x) = \sup_{k \in \BBZ} |\int f(x - y) 2^{-k}\phi(2^{-k}y)\ dy|.
\]
Then 
\begin{equation} \label{oftconclusion}
\|M_{\phi}[H[f]]\|_{L^{1,\infty}} \leq C_{\phi} \|f\|_{L^1}
\end{equation}
where $\|\cdot\|_{L^{1,\infty}}$ denotes the weak $L^1$ Lorentz norm. 
\end{theorem}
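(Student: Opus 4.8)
The plan is to use the standard Calderón–Zygmund decomposition of $f$ at height $\lambda$, writing $f = g + b$ where $\|g\|_{L^\infty} \lesssim \lambda$, $\|g\|_{L^1} \le \|f\|_{L^1}$, and $b = \sum_Q b_Q$ with each $b_Q$ supported on a dyadic cube (interval) $Q$, mean zero, $\|b_Q\|_{L^1} \lesssim \lambda |Q|$, and $\sum_Q |Q| \lesssim \lambda^{-1}\|f\|_{L^1}$. For the good part, since $M_\phi$ is bounded on $L^2$ (it is dominated by the Hardy–Littlewood maximal operator) and $H$ is bounded on $L^2$, we get $\|M_\phi[H[g]]\|_{L^2}^2 \lesssim \|g\|_{L^2}^2 \lesssim \lambda\|f\|_{L^1}$, so by Chebyshev the set where $M_\phi[H[g]] > \lambda$ has measure $\lesssim \lambda^{-1}\|f\|_{L^1}$, as required.

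The main work is the bad part. Let $\Omega^* = \bigcup_Q 5Q$ (a fixed dilate of the exceptional cubes); then $|\Omega^*| \lesssim \lambda^{-1}\|f\|_{L^1}$, so it suffices to estimate $M_\phi[H[b]]$ off $\Omega^*$. The key observation is that $M_\phi \circ H$ has a kernel representation: $M_\phi[H[b]](x) = \sup_k |(K_k * b)(x)|$ where $K_k = 2^{-k}\phi(2^{-k}\cdot) * \mathrm{p.v.}\frac{1}{y}$. For each $Q$ with center $c_Q$ and sidelength $\ell(Q)$, and each $x \notin 5Q$, I would use the mean-zero property of $b_Q$ to write
\[
(K_k * b_Q)(x) = \int (K_k(x-y) - K_k(x - c_Q)) b_Q(y)\, dy,
\]
and bound this by $\int |x - y|$-type estimates on the kernel difference. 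The point is that $K_k$ is, for $2^k \lesssim \ell(Q)$, essentially a smooth bump at scale $2^k$ convolved with $1/y$, so $K_k$ behaves like a Calderón–Zygmund kernel at scales $\gtrsim 2^k$ but is smoothed at scales $\lesssim 2^k$; in all cases one has the uniform-in-$k$ bounds $|K_k(x)| \lesssim |x|^{-1}$ and $|K_k'(x)| \lesssim |x|^{-2}$ (with acceptable Schwartz tails), which is all that is needed. Thus $\sup_k |K_k(x-y) - K_k(x-c_Q)| \lesssim \frac{\ell(Q)}{|x - c_Q|^2}$ for $x \notin 5Q$, $y \in Q$, and therefore
\[
\int_{\rea \setminus 5Q} \sup_k |(K_k * b_Q)(x)|\, dx \lesssim \|b_Q\|_{L^1} \int_{|x - c_Q| > 2\ell(Q)} \frac{\ell(Q)}{|x-c_Q|^2}\, dx \lesssim \|b_Q\|_{L^1} \lesssim \lambda |Q|.
\]
Summing in $Q$ and using the sublinearity of the supremum, $\int_{\rea \setminus \Omega^*} M_\phi[H[b]](x)\, dx \lesssim \lambda \sum_Q |Q| \lesssim \|f\|_{L^1}$, so by Chebyshev the set where $M_\phi[H[b]] > \lambda$ outside $\Omega^*$ has measure $\lesssim \lambda^{-1}\|f\|_{L^1}$. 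Combining the good part, the bad part, and $|\Omega^*|$ gives the weak-type $(1,1)$ bound.

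The step I expect to require the most care is the uniform-in-$k$ kernel estimate for $K_k = \phi_{2^k} * \mathrm{p.v.}\frac{1}{y}$, in particular checking that the constant in $|K_k(x)| \lesssim |x|^{-1}$ and the Hörmander-type bound $|K_k'(x)| \lesssim |x|^{-2}$ do not degenerate as $k \to \pm\infty$: for $|x| \gg 2^k$ one uses that $\phi_{2^k}$ has total mass $\hat\phi(0)$ and rapidly decaying tails so $K_k(x) \approx \hat\phi(0)/x$ with controlled error, while for $|x| \ll 2^k$ the convolution with the smooth bump regularizes the singularity and $|K_k(x)| \lesssim 2^{-k} \lesssim |x|^{-1}$ fails — so in fact one cannot expect $|K_k(x)| \lesssim |x|^{-1}$ for very small $|x|$, but this is irrelevant since we only integrate over $|x - c_Q| > 2\ell(Q) \ge 2 \cdot 2^{-j}$ and for the relevant scales the bump width $2^k$ interacts with $\ell(Q)$ harmlessly; one organizes the sum over $k$ by splitting into $2^k \le \ell(Q)$ and $2^k > \ell(Q)$ and checks the geometric decay in each regime. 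Once this kernel lemma is in hand, the rest is the routine Calderón–Zygmund argument sketched above; this is exactly the "model for the general case" alluded to in the introduction, and the analogous—but considerably more involved—kernel analysis for the multi-frequency operator $\Delta^*\Psi$ will occupy the bulk of the paper.
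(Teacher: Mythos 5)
Your proof is correct, but it takes a genuinely different route from the paper. The paper does not work directly with the composed kernel. Instead it writes the sign function as a Littlewood--Paley sum $\sum_j \hat\psi(2^j\cdot)$, expresses $M_\phi H f = \sup_k|\sum_j \phi_k*\psi_j*f|$, and then uses the Fourier support constraint (since $\hat\phi_k$ lives at frequencies $\lesssim 2^{-k}$ and $\hat\psi_j$ at frequencies $\sim 2^{-j}$, only $j>k$ survives) to bound $\sup_k|\sum_j \cdots| \le \sum_j \sup_{k<j}|\cdots|$. Each $j$-block is then estimated via pointwise bounds on $\phi_k*\psi_j$ and its derivative, uniform in $k<j$, with the usual gain of $|I|2^{-j}$ for $2^j>|I|$ supplying convergence of the sum over $j$. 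You instead work directly with the composed kernels $K_k=\phi_k*\mathrm{p.v.}\,1/y$ and establish a uniform-in-$k$ H\"ormander condition, which turns the whole family into a single vector-valued (here $\ell^\infty$ in $k$) Calder\'on--Zygmund operator. Both arguments are sound; yours is arguably more elementary, while the paper's frequency decomposition is the template that scales to the multi-frequency operator $\Delta^*\Psi$, where the sum over $j$ really matters and is paired with the $\psi_{\upsilon,j}$ decomposition of the multipliers.

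One small correction to your discussion of the delicate step: the worry that $|K_k(x)| \lesssim |x|^{-1}$ ``fails'' for $|x|\ll 2^k$ is backwards. In that regime $|x|^{-1} \gg 2^{-k} \gtrsim \|K_k\|_{L^\infty}$, so the pointwise bound is trivially satisfied there; the same holds for the derivative bound $|K_k'(x)|\lesssim|x|^{-2}$ against $\|K_k'\|_{L^\infty}\lesssim 2^{-2k}$. Combined with the far-field estimate obtained by subtracting $1/x$ (using $\int\phi_k=\hat\phi(0)$ and the rapid decay of $\phi$), the bounds $|K_k(x)|\lesssim|x|^{-1}$, $|K_k'(x)|\lesssim|x|^{-2}$ are genuinely uniform in $k$ for all $x\ne 0$. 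So there is no need to split the sup over $k$ into regimes $2^k\le\ell(Q)$ versus $2^k>\ell(Q)$ (and indeed there is a sup, not a sum, over $k$, so ``geometric decay in $k$'' is not the right mental picture); your earlier, cleaner claim $\sup_k|K_k(x-y)-K_k(x-c_Q)|\lesssim\ell(Q)|x-c_Q|^{-2}$ already does all the work.
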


 Due to the homogeneity of the multiplier defining $H$ (which is not essential for our proof), $M_\phi H$ coincides with a maximally-dilated multiplier operator and the bound above has been known at least since \cite{dappa85omf}.

We note that (in contrast with the $q > 1$ case) without utilizing the cancellation in $M_{\phi}$, any attempt at bounding $M_{\phi}[H[\cdot]]$ at $L^1$ fails utterly. Indeed, it is well known, see for example \cite{titchmarsh48itf} Section 5.16, that there are functions $f \in L^1$ such that $H[f]$ is not locally in $L^1.$ Given such an $f$ we then have $M_\phi[|H[f]|]$ identically infinite for any nonnegative $\phi$ which is nonzero on a neighborhood of the origin. 

\begin{proof}[Proof of Theorem \ref{onefrequencytheorem}]
Without loss of generality assume that $\hat{\phi}$ is supported on $[-1/2,1/2]$. Let $\hat{\psi}$ be a smooth function supported on $[-2,-1/2]\cup[1/2,2]$ such that 
\[
\sum_{j = - \infty}^{\infty} \hat{\psi}(2^j \cdot) = 1_{(0,\infty)} - 1_{(-\infty,0)}.
\]
For Schwartz functions $f$, we then have
\[
M_{\phi}[H[f]] = \sup_{k \in \BBZ} |\sum_{j = -\infty}^{\infty} \phi_k * \psi_j * f|
\]
where $\phi_k = 2^{-k}\phi(2^{-k}\cdot)$ and similarly for $\psi_j.$
By a standard approximation argument, it suffices to prove \eqref{oftconclusion} with the supremum and the sum on the right side above only ranging over finite sets of integers, provided that the constant $C_{\phi}$ is independent of these sets. 

The resulting operator is bounded on $L^2$ by the Hardy-Littlewood maximal theorem. Thus, following the Calder\'{o}n-Zygmund method it suffices to show that for each interval $I$ and mean-zero $L^1$ function $b$ supported on $I$, we have
\begin{equation} \label{CZaddup}
\|\sup_{k} |\sum_{j} \phi_k * \psi_j * b|\|_{L^1((3I)^c)} \leq C_{\phi} \|b\|_{L^1(I)}.
\end{equation}
Using the support properties of $\hat{\phi}$ and $\hat{\psi}$, we see that the left side above
\begin{align*}
&=\| \sup_k |\sum_{j > k} \phi_k * \psi_j * b| \|_{L^1((3I)^c)} \\
&\leq \sum_{j} \| \sup_{k < j} |\phi_k * \psi_j * b| \|_{L^1((3I)^c)}.
\end{align*}
For each $j$, the pointwise estimates
\[
|\phi_k * \psi_j(x)| \leq C_\phi 2^{-j}(1 + |2^{-j}x|)^{-2} 
\]
and 
\[
|\frac{d}{dx}\phi_k * \psi_j(x)| \leq C_\phi 2^{-2j}(1 + |2^{-j}x|)^{-2} 
\]
hold uniformly in $k < j$.
We thus have 
\[
\| \sup_{k < j} |\phi_k * \psi_j * b| \|_{L^1((3I)^c)} \leq C_{\phi}\|2^{-j}(1 + |2^{-j}\cdot|)^{-2} *|b|\|_{L^1((3I)^c)}
\]
and, for $2^j > |I|$
\[
\| \sup_{k < j} |\phi_k * \psi_j * b| \|_{L^1((3I)^c)} \leq |I| 2^{-j} C_{\phi}\|2^{-j}(1 + |2^{-j}\cdot|)^{-2} *|b|\|_{L^1}
\]
which then give \eqref{CZaddup} in the usual way.
\end{proof}

\section{Proof of Theorem \ref{maintheorem}}

Using the following lemma, which was proven in \cite{coifman88mdf} (see also \cite{lacey07irt}), we will show that to establish Theorem \ref{maintheorem} it suffices to consider the special case, Proposition \ref{maintheoremforchar} below, where the functions $\psi_{\upsilon}$ are constant. 

\begin{lemma} \label{intervaldecompositionlemma}
Let $\psi$ be a compactly supported function of bounded $r$-variation for some $1 \leq r < \infty.$ Then for each integer $j \geq 0$, one can find a collection $\calI_j$ of pairwise disjoint intervals and coefficients $\{c_{I}\}_{I \in \calI_j}$ so that $|\calI_j| \leq 2^j$, $|c_{I}| \leq 2^{-j/r} \|\psi\|_{V_r} $, and
\[
\psi = \sum_{j \geq 0} \sum_{I \in \calI_j} c_{I} 1_{I} 
\]
where the sum in $j$ converges uniformly.
\end{lemma}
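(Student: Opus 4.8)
The plan is to manufacture, for each $j \geq 0$, a step function $\psi_j$ that is constant on $O(2^j)$ intervals --- all contained in a fixed bounded interval containing $\mathrm{supp}\,\psi$ --- and obeys $\|\psi - \psi_j\|_{L^\infty} = O(2^{-j/r}\|\psi\|_{V^r})$, and then to read off the collections $\calI_j$ and the coefficients $c_I$ from the telescoping identity $\psi = \sum_{j \geq 0}(\psi_j - \psi_{j-1})$, with the convention $\psi_{-1} := 0$.

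To build $\psi_j$ I would normalize $\|\psi\|_{V^r} = 1$, fix an interval $[a,b] \supseteq \mathrm{supp}\,\psi$, and run a stopping-time argument on the $r$-variation: with $x_0 = a$, let $x_{i+1}$ be the largest point $x$ for which the $r$-variation of $\psi$ on $[x_i,x]$ does not exceed $2^{-j/r}$, and stop once $x_{i+1} = b$. On each half-open piece $[x_i,x_{i+1})$ the oscillation of $\psi$ is then at most $2^{-j/r}$, so setting $\psi_j \equiv \psi(x_i)$ there gives $\|\psi - \psi_j\|_{L^\infty} \leq 2^{-j/r}$. The number of pieces is $O(2^j)$: for every second index $i$ (with the intervening piece nondegenerate) the interval $[x_i,x_{i+2}]$ carries $r$-variation strictly larger than $2^{-j/r}$, and since such intervals overlap only at their endpoints, superadditivity of the $r$-th power of the $r$-variation over them forces there to be fewer than $2^j$ of them. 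Because $\psi_j \to \psi$ uniformly, $\sum_{j \geq 0}(\psi_j - \psi_{j-1})$ converges uniformly to $\psi$; each summand $\psi_j - \psi_{j-1}$ is constant on the common refinement $\calI_j$ of the level-$j$ and level-$(j-1)$ partitions, a collection of $O(2^j)$ pairwise disjoint intervals inside $[a,b]$, and on each $I \in \calI_j$ the constant value satisfies $|c_I| \leq \|\psi - \psi_j\|_{L^\infty} + \|\psi - \psi_{j-1}\|_{L^\infty} = O(2^{-j/r})$. (If one wants $|\calI_j| \leq 2^j$ literally, take the partitions nested and let $\psi_j$ be the average of $\psi$ on each level-$j$ atom; then there is no refinement blow-up.)

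The main obstacle is the one soft point above: $\psi \in V^r$ need not be continuous, so the stopping time can stall at a point where $\psi$ jumps by more than $2^{-j/r}$. I would handle this in the usual way --- a function of bounded $r$-variation is regulated and has only $O(2^j)$ jumps of size exceeding $2^{-j/r}$ (each contributes $\gtrsim 2^{-j}$ to the $r$-th power of the $r$-variation), so one gives each such point its own arbitrarily short atom and runs the stopping time on the complementary intervals, which inflates $|\calI_j|$ by only $O(2^j)$. The absolute constants that then decorate the bounds on $|\calI_j|$ and $|c_I|$ are harmless for the intended application, and are removed by the more careful bookkeeping carried out in \cite{coifman88mdf} (see also \cite{lacey07irt}), which one could also simply quote. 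Throughout, the only inputs are the $r$-variation norm and the compact support of $\psi$, so every constant produced is uniform in $\psi$.
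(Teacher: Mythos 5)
The paper does not prove this lemma --- it is imported from \cite{coifman88mdf} (see also \cite{lacey07irt}) --- so there is no in-paper argument to compare against. Your sketch is the standard route (a stopping time on the $r$-variation producing step approximants $\psi_j$, the telescoping identity $\psi = \sum_j(\psi_j - \psi_{j-1})$, and a separate count of the large jumps of the regulated function $\psi$), and its substance is correct.

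Two points want tightening before the sketch is a proof. (1) Your construction gives $O(2^j)$ atoms and $O(2^{-j/r})$ coefficients, not the literal $2^j$ and $2^{-j/r}\|\psi\|_{V^r}$, and the parenthetical nested-averages fix only cures half of this: it does pin $|\calI_j|\le 2^j$, but $c_I$ still compares the level-$j$ value against the level-$(j-1)$ value over a parent atom with twice the variation budget, so the coefficient bound retains a factor of roughly $2^{1/r}$. The honest repair is the reindexing $j\mapsto j-C_r$ (inserting a few empty levels at the bottom) that you wave at as ``bookkeeping''; this is harmless for the application, where only the geometric convergence in $j$ matters, but it should be said. (2) Handing a large jump at $p$ an ``arbitrarily short'' nondegenerate atom $[p,p+\delta)$ does not rescue the uniform estimate: on that atom the step value $\psi(p)$ differs from $\psi$ by essentially the full right-hand jump for every $\delta>0$. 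The clean fix is to use half-open atoms $(x_i,x_{i+1}]$, run the stopping time on the variation over $(x_i,x]$, and set the step value to the one-sided limit $\psi(x_i^+)$; with that convention the stopping time never stalls and no special jump atoms are needed (alternatively, allow degenerate singleton atoms $\{p\}$). Neither issue reflects a wrong idea, but both need to be made precise.
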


\begin{proposition} \label{maintheoremforchar}
Suppose $1 < q < 2$, $r > 2$, and $\epsilon > 0$. For each finite collection $\Upsilon$ of disjoint intervals and collection of coefficients $\{c_{\upsilon}\}_{\upsilon \in \Upsilon}$ 
\begin{multline} \label{mtfcbound}
\|\Delta^*[\sum_{\upsilon \in \Upsilon} (c_{\upsilon} 1_{\upsilon}\hat{f})\check{\ }]\|_{L^q} \leq C_{q,r,\epsilon}  
(|\Xi| + |\Upsilon|)^{\frac{1}{q} - \frac{1}{r} + \epsilon} \\
 (D_M + \sup_{\xi \in \Xi} \|\sum_{\substack{\omega \in \Omega : |\omega| = 2^k }} \phi_\omega(\xi)\|_{V^r_k}) \sup_{\upsilon \in \Upsilon} |c_{\upsilon}| \|f\|_{L^q}.
\end{multline}
\end{proposition}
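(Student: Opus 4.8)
The plan is to deduce Proposition~\ref{maintheoremforchar} by Marcinkiewicz interpolation between an $L^2$ bound and a weak-type $(1,1)$ bound, the latter carrying essentially all the difficulty. Normalize $D_M + \sup_{\xi\in\Xi}\|\sum_{|\omega|=2^k}\phi_\omega(\xi)\|_{V^r_k}=1$ and $\sup_{\upsilon}|c_\upsilon|=1$, set $A=|\Xi|+|\Upsilon|$, and write $\Psi[f]=\sum_{\upsilon}c_\upsilon(1_\upsilon\hat f)\check{\ }$. Since the $\upsilon$ are disjoint, $\|\Psi[f]\|_{L^2}\le\|f\|_{L^2}$ by Plancherel; combining this with \eqref{LqBourgain} at $q=2$ (and $|\Xi|\le A$, $1+\log A\lesssim_\epsilon A^\epsilon$) gives $\|\Delta^*[\Psi[f]]\|_{L^2}\lesssim_\epsilon A^{1/2-1/r+\epsilon}\|f\|_{L^2}$. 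If we also prove
\[
\|\Delta^*[\Psi[f]]\|_{L^{1,\infty}}\lesssim_\epsilon A^{1-1/r+\epsilon}\|f\|_{L^1},
\]
then interpolating, with $1/q=(1-\theta)\cdot 1+\theta\cdot\tfrac12$, yields operator norm $\lesssim_{q,\epsilon}A^{(1-\theta)(1-1/r+\epsilon)+\theta(1/2-1/r+\epsilon)}=A^{1/q-1/r+\epsilon}$, which, restoring the structural constants, is \eqref{mtfcbound}. So everything reduces to the weak-type estimate.

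For the weak-type estimate, first expose the singular-integral structure of $\Psi$: writing $\upsilon=(a_\upsilon,b_\upsilon)$ and $1_\upsilon=\tfrac12(\mathrm{sgn}(\cdot-a_\upsilon)-\mathrm{sgn}(\cdot-b_\upsilon))$, we get $\Psi=\sum_{a\in\Upsilon'}\eta_a c'_a H_a$, where $\Upsilon'$ is the set of endpoints of intervals in $\Upsilon$ ($|\Upsilon'|\le 2|\Upsilon|$), $\eta_a=\pm1$, $|c'_a|\lesssim1$, and $H_a$ is the multiplier with symbol $\mathrm{sgn}(\cdot-a)$; as in Section~\ref{sectionsfc} decompose $H_a=\sum_j\psi_{j,a}\ast$ with $\widehat{\psi_{j,a}}$ supported on $\{|\xi-a|\sim2^{-j}\}$. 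Since $\Psi$ carries cancellation at the frequencies $\Upsilon'$ while $\Delta^*$ is assembled from symbols concentrated at $\Xi$, we apply the multi-frequency Calder\'on--Zygmund decomposition of \cite{nazarov10czd} to $f$, adapted to the combined set $\Theta:=\Xi\cup\Upsilon'$ (of cardinality $\lesssim A$): at a height $\mu$, write $f=g+\sum_Q b_Q$ with $b_Q$ supported on $Q$, $\sum_Q|Q|\lesssim\|f\|_{L^1}/\mu$, $\sum_Q\|b_Q\|_{L^1}\lesssim\|f\|_{L^1}$, $\|g\|_{L^1}\lesssim\|f\|_{L^1}$, $\|g\|_{L^2}^2\lesssim A\mu\|f\|_{L^1}$, and $\widehat{b_Q}$ vanishing at a set of $\lesssim A$ representatives of $\Theta$ resolvable at scale $|Q|$ (i.e.\ $\Theta$ grouped into clusters of diameter $\lesssim|Q|^{-1}$, one representative per cluster).

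Choose $\mu=A^{-(1-1/r)}\lambda$. The good part is handled by the $L^2$ bound and Chebyshev: $|\{\Delta^*[\Psi[g]]>\lambda/2\}|\lesssim_\epsilon A^{1-2/r+\epsilon}\|g\|_{L^2}^2/\lambda^2\lesssim_\epsilon A^{2-2/r+\epsilon}\mu\|f\|_{L^1}/\lambda^2=A^{1-1/r+\epsilon}\|f\|_{L^1}/\lambda$. The exceptional set $E=\bigcup_Q 3Q$ has $|E|\lesssim\|f\|_{L^1}/\mu=A^{1-1/r}\|f\|_{L^1}/\lambda$. Hence it remains to estimate $|\{\Delta^*[\Psi[\sum_Q b_Q]]>\lambda/2\}\setminus E|$, and by Chebyshev (together with $\sum_Q\|b_Q\|_{L^1}\lesssim\|f\|_{L^1}$) it suffices to prove the per-cube bound
\[
\|\Delta^*[\Psi[b_Q]]\|_{L^1((3Q)^c)}\lesssim_\epsilon A^{1-1/r+\epsilon}\|b_Q\|_{L^1}.
\]

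This per-cube estimate is the heart of the matter, with the single-frequency argument of Section~\ref{sectionsfc} as its model. Fixing $Q$ and the cluster decomposition of $\Theta$ at scale $|Q|$, split $\Delta_k=\sum_c\Delta_{k,c}$ according to clusters, and for each cluster split $\Delta_{k,c}\Psi$ into the contribution of Littlewood--Paley scales $2^{-j}$ of the $H_a$ finer than $2^{-k}$ and those coarser. As in Section~\ref{sectionsfc}, after summing over $a\in\Upsilon'$ the fine contribution is, uniformly in $k$, a Fourier multiplier with a $\Xi$-concentrated Bourgain symbol obeying the smoothness and $V^r_k$ bounds of \eqref{LqBourgain}, whose kernels decay rapidly at physical scale $\sim2^{j}$; the coarse contribution is nonzero only at scales $2^{-k}$ no smaller than the gap from an endpoint to its cluster, and there reduces to a modulated averaging operator applied to the mean-zero function $e^{-2\pi i a\cdot}b_Q$. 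Feeding these into the scheme of \cite{nazarov10czd} — whose key device uses the $V^r$-norm to control $\sup_k$ of a sum over the $\lesssim A$ clusters by $A^{1-1/r}$ rather than by $A$, exploiting the vanishing of $\widehat{b_Q}$ at the cluster representatives — together with the kernel decay as in Section~\ref{sectionsfc}, yields the per-cube bound with the claimed exponent. The main obstacle is carrying out exactly this: one must use the cancellation of $b_Q$ against $\Xi$ (for $\Delta^*$) and against $\Upsilon'$ (for the modulated singular integrals making up $\Psi$) simultaneously — which forces the combined set $\Theta$ and the decomposition of \cite{nazarov10czd} — \emph{without} paying a separate factor $|\Upsilon|$ for the $|\Upsilon|$ modulated Hilbert transforms. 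Thus the Littlewood--Paley pieces at different endpoints must be recombined into one Bourgain-type symbol before invoking \cite{nazarov10czd}, and the smoothness and $V^r_k$ bounds for these combined symbols must be verified uniformly; this is delicate when an endpoint $a\in\Upsilon'$ lies very near a point of $\Xi$, which is precisely why the scale-by-scale clustering of $\Theta$ (rather than any frequency-by-frequency splitting) is indispensable.
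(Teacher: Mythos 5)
Your high-level strategy matches the paper's: interpolate an $L^2$ bound (from \eqref{LqBourgain} plus Plancherel) against a weak-type $(1,1)$ bound, run a multi-frequency Calder\'on--Zygmund decomposition against the combined frequency set $\Xi\cup\{\text{endpoints of }\Upsilon\}$, handle the good part by Chebyshev, and reduce to a per-interval estimate for the bad functions; the exponent bookkeeping in your interpolation step is also correct. The proposal diverges from the paper, and leaves a genuine gap, precisely at the per-cube estimate, which you yourself flag as ``the heart of the matter'' and then describe as a plan rather than carry out. The paper does not recombine the pieces into ``one Bourgain-type symbol'' and then invoke the theorem of \cite{nazarov10czd} as a black box; that reduction is not available, because after multiplying by the characteristic-function symbols the resulting symbol no longer obeys the smoothness hypothesis \eqref{necessarysmoothness} needed for that theorem. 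Instead the paper splits $\sup_k|\Delta_k\sum_j\Psi_j[b_I]|$ into the two ranges $k\le j$ and $j<k$ as in \eqref{mainsplit}, and for the first term introduces the further $L^1$-far/$L^2$-medium split \eqref{L1byL2js}: the far region is controlled by explicit kernel decay of $\check\phi_\omega*\check\psi_{\upsilon,j}$, while the medium region is controlled in $L^2$ using the modulated mean-zero condition at $s_\upsilon$ together with a Cotlar--Stein almost-orthogonality argument for the operators $T_{\upsilon,j}$ (and the range $j<k$ is symmetric, using cancellation at $\Xi$ instead). None of these steps --- the $L^1/L^2$ split, the pointwise kernel estimate \eqref{pointwiseest}, or the almost-orthogonality bound for $\sum_\upsilon T_{\upsilon,j}$ --- appears in your proposal; these are exactly what prevents paying a separate factor of $|\Upsilon|$, and asserting that ``feeding into the scheme of \cite{nazarov10czd} \ldots\ yields the per-cube bound'' does not substitute for them.

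There is also a smaller technical issue in your decomposition of $\Psi$. Writing $1_\upsilon=\tfrac12(\mathrm{sgn}(\cdot-a_\upsilon)-\mathrm{sgn}(\cdot-b_\upsilon))$ and Littlewood--Paley decomposing each $\mathrm{sgn}$ into annuli $\{|\xi-a|\sim2^{-j}\}$ produces pieces whose supports extend beyond $\upsilon$ on both sides, and at scales $2^{-j}\gtrsim|\upsilon|$ the two $\mathrm{sgn}$ pieces must cancel before one can localize; the Fourier supports of pieces attached to different endpoints in $\Upsilon'$ are then no longer disjoint, which spoils the exact orthogonality that the paper exploits. The paper avoids this by decomposing $1_\upsilon$ directly into one-sided, compactly supported bumps $\psi_{s,\upsilon,j},\psi_{m,\upsilon,j},\psi_{d,\upsilon,j}$ all supported inside $\upsilon$, with the middle piece $\psi_{m,\upsilon,j}$ absorbing the coarsest scale. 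You would need to replace your $\mathrm{sgn}$-based decomposition with something of this form (or truncate carefully) before the orthogonality and kernel estimates can be made to work.
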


\begin{proof}[Proof of Theorem \ref{maintheorem} assuming Proposition \ref{maintheoremforchar}]
After a limiting argument, one may assume that all intervals in $\Upsilon$ have finite length. Applying Lemma \ref{intervaldecompositionlemma} to each $\psi_{\upsilon}$ we obtain for $j \geq 0$ a collection $\calI_{\upsilon,j}$ of at most $2^j$ pairwise disjoint subintervals of $\upsilon$ and coefficients $\{c_I\}_{I \in \calI_{\upsilon,j}}$ so that 
\[
\psi_{\upsilon} = \sum_{j \geq 0} \sum_{I \in \calI_{\upsilon,j}} c_{I} 1_I.
\]
Then
\[
\|\Delta^*[\Psi[f]]\|_{L^q}   
\leq \sum_{j \geq 0} \|\Delta^*[\sum_{\upsilon \in \Upsilon} \sum_{I \in \calI_{\upsilon,j}}(c_I 1_I \hat{f})\check{\ }]\|_{L^q}.
\]
Applying Proposition \ref{maintheoremforchar} to the collection of pairwise disjoint intervals $\bigcup_{\upsilon \in \Upsilon} \calI_{\upsilon,j}$ we see that each term on the right above is 
\begin{align*}
&\leq C_{q,r,\epsilon}  (|\Xi| + 2^j|\Upsilon|)^{\frac{1}{q} - \frac{1}{r} + \epsilon} 
 \sup_{\xi \in \Xi} \|\sum_{\substack{\omega \in \Omega : |\omega| = 2^k }} \phi_\omega(\xi)\|_{V^r_k} \sup_{\upsilon \in \Upsilon, I \in \calI_{\upsilon,j}} |c_{I}| \|f\|_{L^q} \\
&\leq C_{q,r,\epsilon}  2^{j(\frac{1}{q} - \frac{1}{r} + \epsilon)}(|\Xi| +|\Upsilon|)^{\frac{1}{q} - \frac{1}{r} + \epsilon} 
 \sup_{\xi \in \Xi} \|\sum_{\substack{\omega \in \Omega : |\omega| = 2^k }} \phi_\omega(\xi)\|_{V^r_k} 2^{-\frac{j}{r}} \sup_{\upsilon \in \Upsilon} \|\psi_{\upsilon}\|_{V^r} \|f\|_{L^q}. 
\end{align*}
The sum over $j \geq 0$ converges after possibly shrinking $\epsilon$ to satisfy $\frac{1}{q} - \frac{2}{r} + \epsilon < 0.$
\end{proof}

\begin{proof}[Proof of Proposition \ref{maintheoremforchar}]
For each $\upsilon \in \Upsilon$ let $s_{\upsilon}$ and $d_{\upsilon}$ denote the left and right endpoints, respectively, of the interval $\upsilon.$ Write
\begin{equation}
1_{\upsilon} = \sum_{j} \psi_{s,\upsilon, j} + \psi_{m,\upsilon,j} + \psi_{d,\upsilon,j}
\end{equation}
where $\psi_{s,\upsilon, j}$ is supported on $(s_{\upsilon} + 2^{-(j+1)}, s_{\upsilon} + .99*2^{-(j-1)})$,  $\psi_{d,\upsilon, j}$ is supported on $(d_{\upsilon} - .99*2^{-(j-1)}, d_{\upsilon} - 2^{-(j+1)})$, $\psi_{m,\upsilon, j}$ is supported on $(\frac{d_{\upsilon} + s_{\upsilon}}{2} - .99*2^{-j}, \frac{d_\upsilon + s_\upsilon}{2} + .99*2^{-j})$, and where $\psi_{s,\upsilon, j} = 0$ for $2^{-(j - 1)} > |\upsilon|,$ $\psi_{d,\upsilon, j} = 0$ for $2^{-(j - 1)} > |\upsilon|$ and  $\psi_{m,\upsilon, j} = 0$ when $2^{-(j-1)} > |\upsilon|$ or $2^{-(j-1)} \leq |\upsilon|/2$ (thus, each function is supported on $\upsilon$, the supports of the functions are finitely overlapping, and each function with parameter $j$ is supported on an interval of diameter approximately $2^{-j}$ around an endpoint of $\upsilon$). Furthermore, we require that the $\psi_{s,\upsilon, j}$ are smooth and satisfy  
\begin{equation} \label{psiadapted}
\|\psi^{(M)}_{s,\upsilon, j}\|_{L^\infty} \leq C_M 2^{Mj} 
\end{equation}
for some large $M$ depending on $\epsilon$
and similarly for the functions $\psi_{m, \upsilon, j},$ and $\psi_{d, \upsilon, j}.$\footnote{Following the standard conventions for the addition of extended reals, this decomposition works equally well if $\{s_{\upsilon}, d_{\upsilon}\}$ has one infinite element. If both endpoints are infinite then $|\Upsilon|=1$ and the theorem is already known.}

For Schwartz functions $f$ we have
\[
\Delta^*[\sum_{\upsilon \in \Upsilon} (c_{\upsilon} 1_{\upsilon}\hat{f})\check{\ }] = \sup_k |\Delta_k[\sum_{\upsilon \in \Upsilon} \sum_j (c_{\upsilon} (\psi_{s, \upsilon, j} + \psi_{m, \upsilon, j} + \psi_{d, \upsilon, j}) \hat{f})\check{\ }]|.
\]
By a standard limiting argument it suffices to prove a version of \eqref{mtfcbound} where the supremum in $k$ and the sum in $j$ above only range over finite sets of integers (provided, as usual that the constant is independent of this set). We will further simplify matters by replacing $\psi_{s, \upsilon, j} + \psi_{m, \upsilon, j} + \psi_{d, \upsilon, j}$ by $\psi_{s, \upsilon, j}$; the $\psi_{d, \upsilon, j}$ term is handled through a completely symmetric argument, and obvious minor modifications suffice to bound the $\psi_{m, \upsilon, j}$ term. Henceforth, abbreviate $\psi_{s, \upsilon, j} =: \psi_{\upsilon, j},\ |\Xi| + |\Upsilon| =: N,$ and $\sum_{\upsilon \in \Upsilon} (c_{\upsilon} \psi_{\upsilon, j} \hat{f})\check{\ } =: \Psi_j[f] $. Since $\sum_j \Psi_j$ is
bounded on $L^2$ with norm $\leq C_\epsilon \sup_{\upsilon \in \Upsilon} |c_{\upsilon}|$, an estimate for $\Delta^*\sum_j\Psi_j$ at $q=2$ with norm bounded by 
\[
A := C_{r,\epsilon} (1 + \log N) N^{\frac{1}{2} - \frac{1}{r}} (D_M + \sup_{\xi \in \Xi} \|\sum_{\substack{\omega \in \Omega : |\omega| = 2^k }} \phi_\omega(\xi)\|_{V^r_k}) \sup_{\upsilon \in \Upsilon} |c_{\upsilon}| 
\]
follows immediately from \eqref{LqBourgain}. Thus, by interpolation, it suffices to prove the weak-type 1-1 estimate
\begin{equation} \label{weak11bound}
|\{ x : \sup_k |\Delta_k[\sum_j \Psi_j[f]](x)| > \lambda \}| \leq C_{\epsilon} A N^{\frac{1}{2} + 5 \epsilon} \lambda^{-1} \|f\|_{L^1}.
\end{equation}
For later convenience, assume a renormalization so that $A = 1$ and 
\begin{equation} \label{cupsilonnormal}
\sup_{\upsilon \in \Upsilon}|c_{\upsilon}| = 1.
\end{equation}

We now perform a multiple-frequency Calder\'{o}n-Zygmund decomposition. Specifically, it was shown in \cite{nazarov10czd} that one can write
\[
f = g + \sum_{I \in \calI} b_I
\]
where $\calI$ is a collection of disjoint intervals satisfying
\[
\sum_{I \in \calI} |I| \leq C N^{1/2} \lambda^{-1} \|f\|_{L^1},
\]
where for each $I \in \calI$ and $\xi \in \Xi \cup \{s_{\upsilon} : \upsilon \in \Upsilon\}$, 
\begin{equation}
\|g\|_{L^2}^2 \leq C N^{1/2} \lambda \|f\|_{L^1}
\end{equation}
\begin{equation}
\|f_I\|_{L^1} \leq C N^{-1/2} \lambda |I|
\end{equation}
\begin{equation}
\|b_I - f_I\|_{L^2} \leq C \lambda |I|^{1/2}
\end{equation}
\begin{equation} \label{meanzcondition}
\int b_I(x) e^{-i\xi x}\ dx = 0,
\end{equation}
and where $b_I$ is supported on $3I$, the interval with the same center as $I$ and thrice the diameter. Above we abbreviate $1_If =: f_I$.

Following the Calder\'{o}n-Zygmund method, to establish \eqref{weak11bound} it will suffice to show that for each $I \in \calI$
\[
\|\sup_k |\Delta_k[\sum_j \Psi_j[b_I]]|\|_{L^1((5I)^c)} \leq C_{\epsilon} N^{5\epsilon} \lambda |I|.
\]
By translation and dilation invariance, we may assume $I$ is centered at $0$ with $1/2 < |I| \leq 1.$
Estimating
\begin{equation} \label{mainsplit}
\sup_k |\Delta_k[\sum_j \Psi_j[b_I]]| \leq \sum_j \sup_{k \leq j} |\Delta_k[\Psi_j[b_I]]| + \sum_k |\sum_{j < k} \Psi_j[\Delta_k[b_I]]|
\end{equation} 
we will start by treating the contribution from the first term on the right side above.

We first consider summands with $2^j > N^{-\epsilon}$. 
Then
\begin{multline} \label{L1byL2js}
\|\sup_{k \leq j} |\Delta_k[\Psi_j[b_I]]|\|_{L^1((5I)^c)} \leq \\ C 2^{j(1 + \epsilon)/2}N^{\epsilon} \|\sup_{k \leq j} |\Delta_k[\Psi_j[b_I]]|\|_{L^2} + \|\sup_{k \leq j} |\Delta_k[\Psi_j[b_I]]|\|_{L^1((2^{j(1 + \epsilon)}N^{2\epsilon}5I)^c)}. 
\end{multline}
It follows from (the renormalization of) \eqref{LqBourgain} that
\[
\|\sup_{k \leq j} |\Delta_k[\Psi_j[b_I]]|\|_{L^2} \leq \|\Psi_j[b_I]\|_{L^2}.
\]
Using the modulated mean-zero condition \eqref{meanzcondition} with $\xi \in \{s_{\upsilon} : \upsilon \in \Upsilon\}$ we see 
\begin{align*}
\Psi_j[b_I] &= \sum_{\upsilon \in \Upsilon} c_{\upsilon} \check{\psi}_{\upsilon, j} * b_I - c_{\upsilon} \check{\psi}_{\upsilon, j}\int_{3I}e^{-i s_{\upsilon}y} b_I(y)\ dy  \\
&=: \sum_{\upsilon \in \Upsilon} T_{\upsilon, j}[b_I] \\
&=\sum_{\upsilon \in \Upsilon} T_{\upsilon, j}[f_I] + T_{\upsilon, j}[b_I - f_I].  
\end{align*}
From the decay (by \eqref{psiadapted}) of the derivative of $e^{-i s_{\upsilon}\cdot}\psi_{j,\upsilon}$ and \eqref{cupsilonnormal} one obtains the pointwise estimate (for $h$ supported on $3I$)
\begin{equation} \label{pointwiseest}
|T_{\upsilon, j}[h](x)| \leq C_{\epsilon} 2^{-2j} (1 + \min(1,2^{-j})|x|)^{-2} \|h\|_{L^1(3I)}
\end{equation}
which gives
\begin{equation} \label{L1toL2Tupsilon}
\|T_{\upsilon, j}[h]\|_{L^2} \leq C_{\epsilon} N^{\epsilon/2} 2^{-3j/2} \|h\|_{L^1(3I)}.
\end{equation} 
The orthogonality of $\{T_{\upsilon, j}[h]\}_{\upsilon \in \Upsilon}$ implies
\[
\|\sum_{\upsilon \in \Upsilon}T_{\upsilon, j}[f_I]\|_{L^2} \leq C_{\epsilon} N^{\epsilon/2} 2^{-3j/2} \lambda |I|
\]
which is acceptable when summed over $j$. 

To obtain an $L^2$ bound for the $b_I - f_I$ term, one considers the almost orthogonality of $\{T_{\upsilon, j}\}_{\upsilon \in \Upsilon}$ as operators from $L^2(3I) \rightarrow L^2.$ Recycling \eqref{L1toL2Tupsilon} gives 
\[
\|T_{\upsilon, j}\|_{L^2(3I) \rightarrow L^2} \leq C_{\epsilon} N^{\epsilon/2} 2^{-3j/2}.
\]
Reusing the genuine orthogonality, one obtains
\[
\|T^*_{\upsilon', j}T_{\upsilon, j}\|_{L^2(3I) \rightarrow L^2(3I)} = 0
\]
when $\upsilon' \neq \upsilon$.
Integrating by parts once (see \cite{nazarov10czd} for details) and arguing as in \eqref{pointwiseest} gives
\[
\|T_{\upsilon', j}T^*_{\upsilon, j}\|_{L^2 \rightarrow L^2} \leq C_{\epsilon} \frac{1}{|s_{\upsilon} - s_{\upsilon'}|} N^{\epsilon} 2^{-3j} 
\]
for $\upsilon' \neq \upsilon.$ Whenever $\psi_{\upsilon, j}$ and $\psi_{\upsilon', j}$ are nonzero we have $|s_{\upsilon} - s_{\upsilon'}| \geq 2^{-j}.$ Thus for each $\upsilon$
\[
\sum_{\upsilon' \in \Upsilon} (\|T_{\upsilon', j}T^*_{\upsilon, j}\|_{L^2 \rightarrow L^2})^1 \leq C_{\epsilon} (1 + \log(N)) N^{2\epsilon} 2^{-2j},
\]
\[
\sum_{\upsilon' \in \Upsilon} (\|T_{\upsilon', j}^*T_{\upsilon, j}\|_{L^2(3I) \rightarrow L^2(3I)})^0 \leq 1,
\]
and hence one can apply a weighted version of the Cotlar-Stein lemma (see \cite{comech07csa}, or use an alternative argument as in \cite{nazarov10czd}) to conclude that
\[
\|\sum_{\upsilon} T_{\upsilon, j}\|_{L^2(3I) \rightarrow L^2} \leq C_{\epsilon} (1 + \log(N))^{1/2} N^{\epsilon} 2^{-j}. 
\]
Summing over $j$, this gives an acceptable contribution from $b_I - f_I.$

Proceeding to the second term on the right of \eqref{L1byL2js} we (again using \eqref{cupsilonnormal}) estimate
\[
\|\sup_{k \leq j} |\Delta_k[\Psi_j[b_I]]|\|_{L^1((2^{j(1 + \epsilon)}N^{2\epsilon}5I)^c)} \leq \sum_{\upsilon \in \Upsilon} \|\sup_{k \leq j} |\Delta_k[\check{\psi}_{\upsilon, j}*b_I]|\|_{L^1((2^{j(1 + \epsilon)}N^{2\epsilon}5I)^c)}.
\]
For each $\upsilon$ and $k \leq j$ the number of dyadic intervals of length $2^{-k}$ which intersect the support of $\psi_{\upsilon, j}$ is at most 3. Thus
\[
\sup_{k \leq j} |\Delta_k[\check{\psi}_{\upsilon, j}*b_I]|\ \leq 3 \sup_{k \leq j} \sup_{|\omega| = 2^{-k}} |\check{\phi}_{\omega} * \check{\psi}_{\upsilon,j} * b_I|. 
\] 
Using \eqref{necessarysmoothness} and \eqref{psiadapted} (and the normalization which ensures $D_M \leq 1$) one obtains the estimate
\[
|\check{\phi}_{\omega} * \check{\psi}_{\upsilon,j}(x)| \leq C_{\epsilon} 2^{-j}(1 + |2^{-j}x|)^{-M}
\]
uniformly in $k \leq j$ and $|\omega| = 2^{-k}$. This gives 
\[
\sup_{k \leq j} \sup_{|\omega| = 2^{-k}} |\check{\phi}_{\omega} * \check{\psi}_{\upsilon,j} * b_I| \leq C_{\epsilon} 2^{-j}(1 + |2^{-j}\cdot|)^{-M}* |b_I| .
\]
Since $b_I$ is supported on $3I$ we thus conclude
\[
 \sum_{\upsilon \in \Upsilon} \|\sup_{k \leq j} |\Delta_k[\check{\psi}_{\upsilon, j}*b_I]|\|_{L^1((2^{j(1 + \epsilon)}N^{2\epsilon}5I)^c)} \leq N C_{\epsilon} N^{-2\epsilon(M-1)}2^{-j\epsilon(M-1)} \|b_I\|_{L^1}.
\]
Taking $M \geq 1 + 1/(2\epsilon - \epsilon^2))$, the sum over $j$ of the right side above is $\leq C_{\epsilon} \lambda |I|$ as desired.

The case $2^j \leq N^{-\epsilon}$ is covered by a reiteration of the argument in the preceding paragraph.

To bound
\[
\|\sum_k |\sum_{j < k} \Psi_j[\Delta_k[b_I]]|\|_{L^1((5I)^c)}
\]
one argues as above, except with roles of $\Delta_k$ and $\Psi_j$ interchanged. Specifically, one now uses the modulated mean-zero condition with $\xi \in \Xi$ to obtain the $L^2$ estimate. For the remaining terms, we rely on the fact that for each $\omega$ with $|\omega| = 2^{-k}$ there are at most 5 pairs $(\upsilon, j)$ with $j < k$, and the support of $\psi_{\upsilon, j}$ intersecting the interval $\omega.$ Thus, for each $k$
\[
|\sum_{j < k} \Psi_j[\Delta_k[b_I]]| \leq C_{\epsilon} 5N 2^{-k}(1 + |2^{-k} \cdot|)^{-M}*|b_I|.
\] 
\end{proof}

\section{Variation-norm estimates} \label{vnsection}

We will now prove variation-norm analogues of Theorems \ref{maintheorem} and \ref{onefrequencytheorem}.

\begin{theorem} \label{varonefrequencytheorem}
Let $\phi$ be a Schwartz function such that $\hat{\phi}$ is compactly supported, let $r > 2,$ and let $V_{\phi}$ be the associated $r$-variation norm operator
\[
V_{\phi}[f](x) = \|\int f(x - y) 2^{-k}\phi(2^{-k}y)\ dy\|_{V^r_k}.
\]
Then 
\begin{equation} 
\|V_{\phi}[H[f]]\|_{L^{1,\infty}} \leq C_{\phi} \|f\|_{L^1}
\end{equation}
where $\|\cdot\|_{L^{1,\infty}}$ denotes the weak $L^1$ Lorentz norm. 
\end{theorem}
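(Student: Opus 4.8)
The plan is to mimic the proof of Theorem~\ref{onefrequencytheorem}, replacing the supremum in $k$ by the $r$-variation norm in $k$ throughout, and invoking the variation-norm strengthenings of the two ingredients used there: the $L^2$-boundedness of the maximally-truncated averaging operator becomes L\'epingle's inequality (the $L^2$ bound for the $r$-variation of dyadic averages, valid for $r>2$), and the Calder\'on--Zygmund off-diagonal estimate must be carried out with $\|\cdot\|_{V^r_k}$ in place of $\sup_k$. First I would reduce, exactly as before, to Schwartz $f$, write $V_\phi[H[f]] = \|\sum_{j} \phi_k * \psi_j * f\|_{V^r_k}$ with $\phi_k, \psi_j$ as in Section~\ref{sectionsfc}, and pass to finite ranges of $j$ and of the indices defining the variation norm (uniformly in these). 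Then L\'epingle's inequality gives the $L^2$ bound $\|V_\phi[H[f]]\|_{L^2} \le C_\phi \|f\|_{L^2}$, since $V_\phi H$ is again a (maximally dilated) multiplier operator with a fixed Schwartz profile, and $r>2$ is exactly the range where the variation norm of the dyadic averaging family is $L^2$-bounded.

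For the Calder\'on--Zygmund step, it suffices to show that for every interval $I$ and mean-zero $b \in L^1$ supported on $I$,
\begin{equation*}
\Big\| \big\| \textstyle\sum_{j} \phi_k * \psi_j * b \big\|_{V^r_k} \Big\|_{L^1((3I)^c)} \le C_\phi \|b\|_{L^1(I)}.
\end{equation*}
Here I would use the elementary bound that for a family indexed by $k$, the $V^r_k$ norm is dominated by the $V^1_k$ norm, which in turn is $\sum_k |(\text{increment at } k)|$; since only $k < j$ contributes to the $j$-th piece (by the frequency supports of $\hat\phi,\hat\psi$), one has the crude estimate
\begin{equation*}
\big\| \textstyle\sum_j \phi_k * \psi_j * b \big\|_{V^r_k} \le \textstyle\sum_j \big\| \phi_k * \psi_j * b \big\|_{V^r_{k<j}} \le \textstyle\sum_j \big( \sup_{k<j} |\phi_k*\psi_j*b| + \sum_{k<j} |\partial_k(\phi_k*\psi_j*b)| \big),
\end{equation*}
but it is cleaner to telescope: for fixed $j$, $\sum_{k} $ (increment in $k$ of $\phi_k*\psi_j*b$) over $k \le k_0 < j$ telescopes to $\phi_{k_0}*\psi_j*b - (\text{limit as } k\to-\infty)$, and the low-frequency limit vanishes on $(3I)^c$ after using the mean-zero property of $b$ together with the kernel decay. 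Concretely, I would reuse verbatim the two pointwise estimates from Section~\ref{sectionsfc},
\[
|\phi_k*\psi_j(x)| \le C_\phi 2^{-j}(1+|2^{-j}x|)^{-2}, \qquad \big|\tfrac{d}{dx}\phi_k*\psi_j(x)\big| \le C_\phi 2^{-2j}(1+|2^{-j}x|)^{-2},
\]
uniform in $k<j$: the first controls $\sup_{k<j}|\phi_k*\psi_j*b|$ and hence the $V^\infty$ (sup) part of the variation norm, while the second, via the mean-zero cancellation of $b$, controls each increment $\phi_{k+1}*\psi_j*b - \phi_k*\psi_j*b$ by $C_\phi 2^{-j}(|I|2^{-j})(1+|2^{-j}x|)^{-2}$; summing these increments geometrically in $k$ (there are effectively $O(1)$ scales $2^{-k}$ comparable to any given size, and the geometric decay in $2^{j-k}$ from the derivative bound makes the $\ell^1_k$-sum converge) bounds the $V^1_{k<j}$ norm, a fortiori the $V^r_{k<j}$ norm, by the same expression that appeared in Section~\ref{sectionsfc}. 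One then integrates over $(3I)^c$ and sums over $j$ with $2^j > |I|$ using the extra factor $|I|2^{-j}$ exactly as in the display following \eqref{CZaddup}, giving \eqref{CZaddup} with $V^r_k$ in place of $\sup_k$; the terms with $2^j \le |I|$ are handled by the same reiteration.

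The main obstacle is purely bookkeeping rather than conceptual: one must check that replacing $\sup_k$ by $\|\cdot\|_{V^r_k}$ costs nothing, i.e.\ that the increments (differences of consecutive, or arbitrary ordered, values in $k$) obey the same kernel and derivative bounds used for the supremum. This works because $\partial_k \phi_k$ has the same Schwartz decay as $\phi_k$ at scale $2^{-k}$, so a discrete-in-$k$ increment is controlled by $\int |\partial_k \phi_k * \psi_j|$, and the resulting $\ell^1_k$-summability against the geometric weight $2^{j-k}$ (for $k<j$) is exactly what the support condition $k<j$ provides; no genuine variation-norm square-function machinery beyond L\'epingle's $L^2$ inequality is needed. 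One should be slightly careful that the variation norm includes the $L^\infty$ term and a supremum over \emph{all} increasing sequences, not just consecutive integers, but since our increments already sum absolutely this is automatic: $\|g_k\|_{V^r_k} \le \|g_k\|_{V^1_k} \le \|g_{k_0}\|_{L^\infty} + \sum_k |g_{k+1}-g_k|$ for any base point, and here the base point can be sent to $k\to -\infty$ where $g_k \to 0$ on $(3I)^c$.
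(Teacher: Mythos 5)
Your overall architecture matches the paper's: an $L^2$ bound via the $r>2$ variation estimate for averages, a Calder\'on--Zygmund reduction, and a pointwise $V^1_k$ estimate for $\phi_k*\psi_j*b$ on $(3I)^c$, with the $j$-sum closed by the mean-zero property of $b$ exactly as in Theorem~\ref{onefrequencytheorem}. Where the argument breaks down is in the one genuinely new step, the source of summability in $k$. You attribute the increment bound to ``the second [pointwise estimate], via the mean-zero cancellation of $b$,'' but the $x$-derivative estimate together with mean-zero gives a bound on $\phi_k*\psi_j*b$ itself, uniformly in $k<j$; it carries the factor $|I|2^{-j}$ but \emph{no} decay in $k$, so $\sum_{k<j}|\phi_{k+1}*\psi_j*b - \phi_k*\psi_j*b|$ would not converge from that estimate alone. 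Later you invoke a ``geometric weight $2^{j-k}$'' and credit it to the fact that $\partial_k\phi_k$ has Schwartz decay at scale $2^{-k}$ and to ``the support condition $k<j$.'' Neither of those facts produces the factor: a Schwartz profile at scale $2^{-k}$ convolved with $\psi_j$ for $k<j$ gives only the uniform bound $2^{-j}(1+|2^{-j}x|)^{-2}$, with no $2^{k-j}$.

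The missing ingredient is a Fourier-side cancellation in the $k$-increments themselves: $\hat{\phi}_{k_l}(0) - \hat{\phi}_{k_{l-1}}(0) = 0$ (both equal $\hat\phi(0)$), and $\|(\hat{\phi}_{k_l}-\hat{\phi}_{k_{l-1}})'\|_{L^\infty} \le C\,2^{k_l}$, so that on the support of $\hat\psi_j$ (where $|\xi| \sim 2^{-j}$) one has $|\hat{\phi}_{k_l}-\hat{\phi}_{k_{l-1}}| \le C\,2^{k_l - j}$; this, not the physical-space decay of $\partial_k\phi_k$ or the mere presence of the constraint $k<j$, is what puts the factor $2^{k_l-j}$ into the pointwise bounds \eqref{sfvarphi}--\eqref{sfvarphiprime} and makes the sum over an arbitrary increasing integer sequence $k_0<\cdots<k_L<j$ converge. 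Equivalently in your $\partial_k$ language: $\widehat{\partial_k\phi_k}(\xi) = \log 2\cdot 2^k\xi\,\hat\phi'(2^k\xi)$ vanishes at $\xi=0$, and it is this zero meeting the scale $|\xi|\sim 2^{-j}$ of $\hat\psi_j$ that produces $2^{k-j}$. Without identifying that cancellation, the $V^1_{k<j}$ bound you need is not established, so the proof as written has a gap at precisely the place where the variation-norm statement requires something beyond Theorem~\ref{onefrequencytheorem}.
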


\begin{proof}
Since $r > 2$, we have $V_{\phi}$ bounded on $L^2$ (see, for example, \cite{jones08svj}). Following the proof and notation from Theorem \ref{onefrequencytheorem} it thus remains to estimate, for $x \in (3I)^c$ and each $j$
\[
\|\phi_k*\psi_j*b(x)\|_{V^r_{k < j}}.
\]
Fix a sequence $k_0 < \ldots < k_L < j$ and consider 
\begin{equation} \label{sfvarpoint}
\sum_{l = 1}^L |\phi_{k_l}*\psi_j*b(x) - \phi_{k_{l-1}} * \psi_{j}*b(x)|.
\end{equation}
For each $l$ we have $\hat{\phi}_{k_l}(0) - \hat{\phi}_{k_{l-1}}(0) = 0$ and 
\[
\|(\hat{\phi}_{k_l} - \hat{\phi}_{k_{l-1}})'\|_{L^{\infty}} \leq C 2^{k_{l}}
\]
which implies that $|\hat{\phi}_{k_l} - \hat{\phi}_{k_{l-1}}| \leq C 2^{k_l - j}$ on the support of $\hat{\psi}_j$. This gives
\[
\| ((\hat{\phi}_{k_l} - \hat{\phi}_{k_{l-1}}) \hat{\psi}_j)^{(m)}\|_{L^{\infty}} \leq C 2^{k_l - j} 2^{m j}
\]
for $m = 0,2$ and so 
\begin{equation} \label{sfvarphi}
|(\phi_{k_l} - \phi_{k_{l-1}}) * \psi_j(x)| \leq C 2^{k_{l} - j} 2^{-j}(1 + |2^{-j}x|)^{-2} 
\end{equation}
and
\begin{equation} \label{sfvarphiprime}
|\frac{d}{dx}((\phi_{k_l} - \phi_{k_{l-1}})* \psi_j)(x)| \leq C 2^{k_{l} - j} 2^{-2j}(1 + |2^{-j}x|)^{-2}.
\end{equation}
From \eqref{sfvarphi} one sees that for each $x$ and $j$ \eqref{sfvarpoint} is
\[
\leq C 2^{-j}(1 + |2^{-j}\cdot|)^{-2} * |b|(x)
\]
and that for $2^j > |I|$ \eqref{sfvarpoint} is
\[
\leq (|I|/2^{j}) 2^{-j}(1 + |2^{-j}\cdot|)^{-2} * |b|(x)
\]
as desired.

\end{proof}

\begin{proof}[Proof of Theorem \ref{varmaintheorem}]
As in Theorem \ref{maintheorem}, the proof follows immediately from a suitable version of Proposition \ref{maintheoremforchar}; we retain the notation therein. From a result in \cite{nazarov10czd} we see that the estimate at $q=2$ holds with norm 
\[
A =  C_{r,s}  
(1 + \log N) N^{(\frac{1}{2} - \frac{1}{r})\frac{s}{s-2}}\\
 (D_M + \sup_{\xi \in \Xi} \|\sum_{\substack{\omega \in \Omega : |\omega| = 2^k }} \phi_\omega(\xi)\|_{V^r_k}) \sup_{\upsilon \in \Upsilon} \|\psi_{\upsilon}\|_{V^r}
\]  
which, again, we renormalize to $1$. The proof of Proposition \ref{maintheoremforchar} then carries through except for the treatment of the second term on the right side of \eqref{L1byL2js} and the terms $2^j \leq N^{-\epsilon}$ from the first term on the right side of \eqref{mainsplit}. In both situations we must consider, for fixed $\upsilon, j$ and $x$
\[
\|\Delta_k[\check{\psi}_{\upsilon, j} * b_I](x)\|_{V^s_{k \leq j}}.
\]
Let $\tilde{\Omega}$ be the set of minimal dyadic intervals in 
\[
\{\omega \in \Omega : \omega \cap \Xi \neq \emptyset, \omega \cap (s_{\upsilon}, s_{\upsilon} + 2^{-(j-1)}) \neq \emptyset, \text{\ and\ } |\omega| \geq 2^{-j}\}.
\] 
Then $|\tilde{\Omega}| \leq 3.$ Let $\tilde{\Xi} \subset \Xi$ be chosen so that $|\tilde{\Xi}| = |\tilde{\Omega}|$ and so that for each $\omega \in \tilde{\Omega}$, $\omega \cap \tilde{\Xi} \neq \emptyset.$ Finally, for each $\xi \in \tilde{\Xi}$ let $k_{\xi}$ be chosen so that the interval in $\tilde{\Omega}$ containing $\xi$ has length $2^{-k_{\xi}}.$ Then
\[
\|\Delta_k[\check{\psi}_{\upsilon, j} * b_I](x)\|_{V^s_{k \leq j}} \leq C \sum_{\xi \in \tilde{\Xi}} \|\check{\phi}_{\omega_{\xi, k}} * \check{\psi}_{\upsilon, j} * b_I\|_{V^s_{k \leq k_{\xi}}}
\]
where $\omega_{\xi,k}$ is the dyadic interval of length $2^{-k}$ containing $\xi.$
For each $\xi \in \tilde{\Xi}$, the support of $\psi_{\upsilon, j}$ is distance $\leq C 2^{-k_\xi}$ away from $\xi.$ Thus, 
given any $k_{l-1} < k_l \leq k_{\xi}$ we have
\[
|\phi_{\omega_{\xi, k_{l - 1}}}(\eta) - \phi_{\omega_{\xi, k_l}}(\eta)| \leq |\phi_{\omega_{\xi, k_{l - 1}}}(\xi) - \phi_{\omega_{\xi, k_l}}(\xi)| + C 2^{k_{l} - k_{\xi}}
\]
for $\eta$ in the support of $\psi_{\upsilon, j}.$ It then follows that 
\[
|(\check{\phi}_{\omega_{\xi, k_{l - 1}}} - \check{\phi}_{\omega_{\xi, k_l}})*\check{\psi}_{\upsilon, j}| \leq C (|\phi_{\omega_{\xi, k_{l - 1}}}(\xi) - \phi_{\omega_{\xi, k_l}}(\xi)| + 2^{k_{l} - k_{\xi}}) 2^{-j}(1 + |2^{-j}\cdot|)^{-M}
\]
and so 
\[
\|\check{\phi}_{\omega_{\xi, k}} * \check{\psi}_{\upsilon, j} * b_I(x)\|_{V^s_{k \leq k_{\xi}}} \leq C (\|\phi_{\omega_{\xi, k}}(\xi)\|_{V^s_{k \leq k_{\xi}}} + \sum_{k \leq k_{\xi}}2^{k - k_{\xi}}) 2^{-j}(1 + |2^{-j} \cdot|)^{-M} * |b_I|(x).
\]
This gives the desired bound, since by our normalization 
\[
\|\phi_{\omega_{\xi, k}}(\xi)\|_{V^s_{k \leq k_{\xi}}} \leq \sup_{\xi \in \Xi} \|\sum_{\substack{\omega \in \Omega : |\omega| = 2^k }} \phi_\omega(\xi)\|_{V^r_k} \leq 1.
\]
\end{proof}


\bibliographystyle{hamsplain}
\bibliography{roberlin}

\end{document}